\documentclass[a4paper,10pt]{amsart}

\usepackage{amsmath}
\usepackage{amssymb}
\usepackage{graphicx}
\usepackage{xcolor}

\usepackage{amsthm}
\theoremstyle{definition}
\newtheorem{proposition}{Proposition}[section]
\newtheorem{theorem}{Theorem}[section]
\newtheorem{corollary}{Corollary}[section]
\newtheorem{lemma}{Lemma}[section]

\newtheorem{defn}{Definition}[section]
\newtheorem{remark}{Remark}[section]

\theoremstyle{remark}
\newtheorem*{question}{Question}

\begin{document}

\title[Zero-divisor graphs and zero-divisor real rank]{Zero-divisor graphs and zero-divisor real rank of $C^*$-algebras}

\author{Ali Taghavi}
\address{Department of  Mathematics, Qom University of Technology, Qom, Iran.}
\email{taghavi@qut.ac.ir}

\date{\today}

\subjclass[2020]{Primary 46L05; Secondary 05C40, 54F45}

\keywords{$C^{*}$-algebra, positive zero divisors, zero-divisor graph, real rank, almost-$P$-space, covering dimension}

\begin{abstract}
We study two invariants of $C^*$-algebras built from positive zero
divisors. First, we associate with a $C^*$-algebra $A$ the graph
$\Gamma_+(A)$ whose vertices are the nonzero positive zero divisors of
$A$, two vertices being adjacent when they are orthogonal. We show that
$\Gamma_+$ of the Calkin algebra is connected of diameter $3$, that
$\Gamma_+(B(H))$ is connected of diameter $4$ (for $\dim H\ge3$), that
$\Gamma_+(A)$ is connected of diameter at most $4$ for every von Neumann
factor $A\neq\mathbb C,M_2(\mathbb C)$, and of diameter at most $3$ if
$A$ is not of type~I, whereas $\Gamma_+(M_2(\mathbb C))$ is
disconnected. Second, we study the zero-divisor real rank $ZDRR(A)$,
obtained by replacing invertibility in the definition of real rank with
the weaker condition of being a non-zero-divisor, so that
$ZDRR(A)\le RR(A)$. For commutative algebras $C(X)$ we prove
$ZDRR(C(X))=RR(C(X))=\dim X$ when $X$ is a compact almost-$P$-space,
while $ZDRR(C(X))=0$ for separable $X$. Using Levy's connected, totally
ordered almost-$P$-space we construct unital commutative
$C^*$-algebras $A_n$ with $ZDRR(A_n)=n$ for every $n\in\mathbb N_0$.
On the other hand $ZDRR(C(X\times[0,1]))=0$ for every compact Hausdorff
space $X$, so $ZDRR$ and $RR$ can differ arbitrarily.
\end{abstract}

\maketitle

\section{Introduction}

For a commutative ring $R$, the zero-divisor graph $\Gamma(R)$ of
Anderson and Livingston \cite{AndersonLivingston1999} has the nonzero
zero divisors as vertices, two of them being adjacent when their product
is zero; it is connected and has diameter at most $3$. In a
$C^*$-algebra the natural analogue is obtained by restricting to
\emph{positive} elements, for which $ab=0$ is equivalent to orthogonality
and is automatically symmetric. For a $C^*$-algebra $A$ let $Z_+(A)$ be
the set of nonzero positive zero divisors, and let $\Gamma_+(A)$ be the
graph with vertex set $Z_+(A)$ in which two distinct vertices $a,b$ are
joined when $ab=0$.

In Section~2 we determine the connectivity and diameter of
$\Gamma_+(A)$ for several classes of $C^*$-algebras
(Theorem~\ref{thm:diam}, Corollary~\ref{cor:factors}): the graph of the
Calkin algebra $B(H)/K(H)$ is connected of diameter $3$, the graph of
$B(H)$ is connected of diameter exactly $4$ when $\dim H\ge3$, and the
graph of a von Neumann factor other than $\mathbb C$ and $M_2(\mathbb C)$
is connected of diameter at most $4$, and of diameter at most $3$ if the
factor is not of type~I. The graph of $M_2(\mathbb C)$ is disconnected.
In particular the bound $3$ valid for commutative rings does not hold
for positive zero-divisor graphs of $C^*$-algebras.

In Section~3 we turn to the zero-divisor real rank $ZDRR(A)$. For a
unital $C^*$-algebra $A$ and $n\ge0$, $ZDRR(A)\le n$ means that every
$(n+1)$-tuple of self-adjoint elements can be approximated by a tuple
$(b_1,\dots,b_{n+1})$ for which $b_1^2+\cdots+b_{n+1}^2$ is a
non-zero-divisor. This is the definition of real rank with invertibility
replaced by the weaker condition of being a non-zero-divisor, so
$ZDRR(A)\le RR(A)$. For $A=C(X)$ a function is a non-zero-divisor
precisely when its zero set has empty interior, and one is led to ask for
which compact Hausdorff spaces $X$ the equality
$ZDRR(C(X))=RR(C(X))=\dim X$ holds. It fails for separable $X$ (where
$ZDRR(C(X))=0$, Proposition~\ref{prop:sep}), and we prove that it holds
for every compact almost-$P$-space (Theorem~\ref{thm:almostP}). Combined
with Levy's example of a connected, totally ordered almost-$P$-space
\cite{Levy1977}, the stability of the almost-$P$ property under finite
products (Lemma~\ref{lem:prod}) and Fedorchuk's dimension
formula for products with a linearly ordered continuum
\cite{Fedorchuk1998}, this yields commutative unital $C^*$-algebras
$A_n$ with $ZDRR(A_n)=n$ for every $n$.
In contrast, tensoring with $C[0,1]$ destroys the rank: we show that
$ZDRR(C(X\times[0,1]))=0$ for every compact Hausdorff space $X$
(Theorem~\ref{thm:interval}), so that $ZDRR$ and $RR$ can differ
arbitrarily (Corollary~\ref{cor:gap}).

\medskip
\noindent\textbf{Relation to earlier work.} The graph $\Gamma_+(A)$, the
invariant $ZDRR$ and the set $\mathcal I(A)$ of orthogonal positive sums
were introduced in the author's earlier note \cite{Taghavi2013}. That
note contains the following results relevant here: the Calkin algebra
graph has diameter $3$; the graph of $B(H)$ was also asserted to have
diameter $3$; for a factor of dimension at least $3$ the graph is
connected, with a diameter bound of $6$ proved in the body; the graph of
$M_2(\mathbb C)$ is disconnected; $ZDRR(C(X))=0$ for separable $X$ and for
the lexicographic square; and it was asked (Question~5 there) whether
every value of $ZDRR$ occurs. The present paper differs from
\cite{Taghavi2013} in the following ways.
\begin{itemize}
\item The statement of \cite{Taghavi2013} that $\Gamma_+(B(H))$ has
diameter $3$ is incorrect: we show that the diameter is $4$ whenever
$\dim H\ge3$ (Theorem~\ref{thm:diam}).
\item The diameter bound for factors is improved from $6$ to $4$, and to
$3$ for factors that are not of type~I (Corollary~\ref{cor:factors}).
\item We prove that $ZDRR(C(X))=RR(C(X))=\dim X$ for every compact
almost-$P$-space (Theorem~\ref{thm:almostP}), and we answer the
realization question of \cite{Taghavi2013} for every finite value:
$ZDRR(C(X^n))=n$ (Section~3.1).
\item We show that $ZDRR(C(X\times[0,1]))=0$ for every compact Hausdorff
space $X$ (Theorem~\ref{thm:interval}), which has no counterpart in
\cite{Taghavi2013}.
\end{itemize}
Full proofs are given here for the Calkin algebra and the other graph
results, and for the separable case of $ZDRR$. The set $\mathcal I(A)$
is studied in a separate paper.

\section{Zero-divisor graph of positive elements}

Let $A$ be a $C^*$-algebra. An element $x\in A$ is called a \emph{zero divisor} if $x\neq0$ and $xy=0$ or $yx=0$ for some nonzero $y\in A$. For $x\geq0$ this is equivalent to the existence of a nonzero \emph{positive} $z\in A$ with $xz=0$: if $xy=0$ take $z=yy^*$; if $yx=0$ then $xy^*=0$ and we take $z=y^*y$. We denote by $Z_{+}(A)$ the set of all
positive zero divisors of $A$. Clearly the graph defined below is invariant under $*$-isomorphisms. We associate to $A$ the
undirected graph $\Gamma_{+}(A)$ whose vertices are the elements of
$Z_{+}(A)$, with two distinct vertices $a$ and $b$ joined by an edge
if and only if
\[
ab=0.
\]
Since positive elements are self-adjoint, $ab=0$ implies $ba=0$.

A graph is connected if every two vertices can be joined by a path.
For two vertices $a$ and $b$, we denote by $d(a,b)$ the length of a
shortest path joining them. The diameter of a connected graph
$\Gamma$, denoted by $\operatorname{diam}\Gamma$, is
\[
\operatorname{diam}\Gamma
=
\sup\{d(a,b):a,b\in V(\Gamma)\}.
\]

We begin with two elementary Hilbert-space lemmas.

\begin{lemma}
Let $H_{1}$ and $H_{2}$ be infinite-dimensional closed subspaces of
a Hilbert space $H$. Then there exist infinite-dimensional closed
subspaces
\[
L\subseteq H_{1},
\qquad
F\subseteq H_{2},
\]
such that
\[
L\perp F.
\]
\end{lemma}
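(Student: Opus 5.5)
We build $L$ and $F$ from two orthonormal sequences chosen by an inductive ``almost orthogonal'' selection, and then correct them to exact orthogonality. A cleaner route avoids perturbation entirely by using projections. Let $P_1,P_2$ denote the orthogonal projections onto $H_1,H_2$. We distinguish two cases according to whether the compression $T=P_1P_2|_{H_2}:H_2\to H_1$ has a large kernel.

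\emph{Case 1: $H_2\cap H_1^\perp$ is infinite-dimensional.} Take $F=H_2\cap H_1^\perp$ and $L=H_1$; these are orthogonal and we are done. Symmetrically, if $H_1\cap H_2^\perp$ is infinite-dimensional we take $L=H_1\cap H_2^\perp$ and $F=H_2$.

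\emph{Case 2 (general).} Choose inductively unit vectors $x_1,y_1,x_2,y_2,\dots$ with $x_n\in H_1$, $y_n\in H_2$, where
\[
x_n\perp\{x_1,\dots,x_{n-1},y_1,\dots,y_{n-1}\},\qquad y_n\perp\{x_1,\dots,x_n,y_1,\dots,y_{n-1}\}.
\]
At each stage this is possible: the set of $x\in H_1$ orthogonal to finitely many given vectors $v_1,\dots,v_k$ is the kernel of the finite-rank map $x\mapsto(\langle x,v_i\rangle)_i$ restricted to $H_1$. That kernel has finite codimension in the infinite-dimensional space $H_1$, hence is nonzero (indeed infinite-dimensional). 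The same argument applies in $H_2$.

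By construction, all the vectors $x_i,y_j$ are mutually orthogonal. Let $L=\overline{\operatorname{span}}\{x_n\}\subseteq H_1$ and $F=\overline{\operatorname{span}}\{y_n\}\subseteq H_2$. These are closed, infinite-dimensional (they contain orthonormal sequences), and $L\perp F$, since orthogonality of the spanning sets passes to closed linear spans by linearity and continuity of the inner product. Note that Case~1 is therefore unnecessary; the induction alone suffices.

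The only point needing care is the step asserting that a finite-codimensional subspace of an infinite-dimensional space is nonzero. This is immediate from linear algebra: the restricted map goes into $\mathbb C^k$, so its kernel has codimension at most $k$. There is no real obstacle.
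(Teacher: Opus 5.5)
Your Case~2 induction is the paper's proof: alternately pick unit vectors in $H_1$ and $H_2$ orthogonal to the finite span of everything chosen so far, then take closed linear spans. The argument is correct. As you note yourself, Case~1 and the opening remark about perturbing to exact orthogonality are unnecessary and can be deleted.
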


\begin{proof}
We construct orthonormal sequences $(e_n)$ in $H_{1}$ and $(f_n)$
in $H_{2}$ inductively.

Choose a unit vector $e_{1}\in H_{1}$. Since $H_{2}$ is
infinite-dimensional, we can choose a unit vector
\[
f_{1}\in H_{2}\cap e_{1}^{\perp}.
\]

Suppose that orthonormal vectors
\[
e_{1},\ldots,e_{n}
\quad\text{and}\quad
f_{1},\ldots,f_{n}
\]
have been chosen so that
\[
\langle e_i,f_j\rangle=0
\qquad (1\leq i,j\leq n).
\]
Since the span of $e_{1},\ldots,e_{n},f_{1},\ldots,f_{n}$ is finite-dimensional and
$H_{1}$ is infinite-dimensional, we may choose a unit vector
\[
e_{n+1}\in H_{1}\cap
\operatorname{span}\{e_{1},\ldots,e_{n},f_{1},\ldots,f_{n}\}^{\perp}.
\]
Similarly, since the span of $e_{1},\ldots,e_{n+1},f_{1},\ldots,f_{n}$ is
finite-dimensional and $H_{2}$ is infinite-dimensional, we may
choose a unit vector
\[
f_{n+1}\in H_{2}\cap
\operatorname{span}\{e_{1},\ldots,e_{n+1},f_{1},\ldots,f_{n}\}^{\perp}.
\]
Thus the two sequences remain orthonormal and every $e_i$ is
orthogonal to every $f_j$.

Let
\[
L=\overline{\operatorname{span}}\{e_n:n\geq1\},
\qquad
F=\overline{\operatorname{span}}\{f_n:n\geq1\}.
\]
Then $L$ and $F$ are infinite-dimensional closed subspaces of
$H_{1}$ and $H_{2}$, respectively, and $L\perp F$.
\end{proof}

\begin{lemma}
Let $H_{0}$ be an infinite-dimensional closed subspace of a Hilbert
space $H$ such that $H_{0}^{\perp}$ is also infinite-dimensional.
Let $p_{H_{0}}$ denote the orthogonal projection onto $H_{0}$. Then
the image of $p_{H_{0}}$ in the Calkin algebra
\[
B(H)/K(H)
\]
is a nonzero positive zero divisor.
\end{lemma}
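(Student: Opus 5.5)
Write $\pi:B(H)\to B(H)/K(H)$ for the quotient map, $p=p_{H_0}$ and $q=1-p=p_{H_0^\perp}$. We must check three things about $\pi(p)$: it is positive, it is nonzero, and it is a zero divisor. The natural witness for the last property is $\pi(q)$, since it is also a projection and $pq=0$ in $B(H)$.

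\textbf{Positivity.} This is immediate. The map $\pi$ is a $*$-homomorphism and $p=p^*p\ge0$, so $\pi(p)=\pi(p)^*\pi(p)\ge0$. In fact $\pi(p)$ is itself a projection.

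\textbf{Zero divisor.} Compute $\pi(p)\pi(q)=\pi(pq)=\pi(0)=0$. By the remark after the definition of $\Gamma_+(A)$, a positive element $x$ is a zero divisor once $x\neq0$ and $xz=0$ for some nonzero positive $z$. So with $z=\pi(q)\ge0$, everything reduces to showing that $\pi(p)$ and $\pi(q)$ are both nonzero.

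\textbf{Nonvanishing (the main step).} We show that a projection $r$ onto an infinite-dimensional closed subspace $K$ is not compact. Take an orthonormal sequence $(e_n)$ in $K$. Then $re_n=e_n$ and $\|e_n-e_m\|=\sqrt2$ for $n\neq m$, so the image under $r$ of the unit ball has no convergent subsequence along $(e_n)$. Hence $r\notin K(H)$.

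Alternatively, if $r$ were compact, its restriction to $K$ would be the identity of $K$. That identity would then be a compact operator on an infinite-dimensional space, contradicting Riesz's lemma.

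Applying this to $r=p$ (using that $H_0$ is infinite-dimensional) and to $r=q$ (using that $H_0^\perp$ is infinite-dimensional) gives $\pi(p)\neq0$ and $\pi(q)\neq0$. This is exactly where both dimension hypotheses are used. Combining the three steps, $\pi(p)$ is a nonzero positive zero divisor, annihilated by the nonzero positive element $\pi(q)$.

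The only step requiring any argument is the non-compactness of infinite-rank projections; the rest is formal. I expect no real obstacle.
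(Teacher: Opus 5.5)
Your proof is correct and follows essentially the same route as the paper. Both show that $\pi(p_{H_0})$ and $\pi(1-p_{H_0})$ are nonzero because a projection onto an infinite-dimensional subspace is noncompact (the paper uses your ``alternative'' argument, via the identity on the unit ball of $H_0$), and then observe that $\pi(p_{H_0})\pi(1-p_{H_0})=0$.
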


\begin{proof}
Since $H_{0}$ is infinite-dimensional, the orthogonal projection
$p_{H_{0}}$ is not compact. Indeed, if it were compact, its
restriction to the unit ball of $H_{0}$ would have relatively
compact image, whereas that restriction is the identity operator on
the infinite-dimensional space $H_{0}$. Hence the image
\[
\pi(p_{H_{0}})
\]
of $p_{H_{0}}$ in $B(H)/K(H)$ is nonzero and positive.

Since $H_{0}^{\perp}$ is infinite-dimensional, the orthogonal
projection
\[
1-p_{H_{0}}=p_{H_{0}^{\perp}}
\]
is also noncompact. Consequently,
\[
\pi(1-p_{H_{0}})
\]
is a nonzero positive element of the Calkin algebra. Moreover,
\[
\pi(p_{H_{0}})\pi(1-p_{H_{0}})
=
\pi\bigl(p_{H_{0}}(1-p_{H_{0}})\bigr)
=
0.
\]
Thus $\pi(p_{H_{0}})$ is a nonzero positive zero divisor.
\end{proof}

We now apply these lemmas to the Calkin algebra and to $B(H)$.

We shall also need the following lifting lemma, which allows orthogonality
relations in a quotient $A/J$ to be lifted to $A$.

\begin{lemma}\label{lem:lift}
Let $A$ be a $C^*$-algebra, let $J$ be a closed two-sided ideal of $A$,
let $\pi:A\to A/J$ be the quotient map, and let $e,f\in(A/J)_+$ satisfy
$ef=0$. Let $x,y\in A_+$ with $\pi(x)=e$ and $\pi(y)=f$, and put
\[
z=(x-y)_+,\qquad w=(x-y)_-.
\]
Then $z,w\in A_+$, $zw=0$, $\pi(z)=e$, $\pi(w)=f$, $\|z\|\leq\|x\|$ and
$\|w\|\leq\|y\|$.
\end{lemma}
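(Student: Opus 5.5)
The proof rests on continuous functional calculus applied to the self-adjoint element $h=x-y$, together with the fact that $\pi$ commutes with functional calculus. Put $z=h_+=f_+(h)$ and $w=h_-=f_-(h)$, where $f_+(t)=\max(t,0)$ and $f_-(t)=\max(-t,0)$. Both functions are continuous and vanish at $0$, so $z$ and $w$ lie in $A$ even when $A$ is non-unital. They are positive, and $zw=(f_+f_-)(h)=0$ because $f_+f_-\equiv0$ on $\mathbb R$.

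Next we identify $\pi(z)$ and $\pi(w)$. Since $\pi$ is a $*$-homomorphism, $\pi(g(h))=g(\pi(h))$ for every continuous $g$ with $g(0)=0$. This holds for polynomials without constant term, and follows in general by Weierstrass approximation on $\sigma(h)\cup\{0\}$. Hence $\pi(z)=(e-f)_+$ and $\pi(w)=(e-f)_-$.

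It remains to compute $(e-f)_\pm$. Because $ef=fe=0$, the elements $e$ and $f$ commute and generate a commutative $C^*$-subalgebra $C_0(\Omega)$. In this picture $e,f\ge0$ are functions with $e(\omega)f(\omega)=0$ at every point, so $(e-f)_+=e$ and $(e-f)_-=f$ pointwise. Alternatively, without Gelfand theory: $e-f=e+(-f)$ is a decomposition into positive and negative parts that are orthogonal, and such a decomposition is unique. Either way, $\pi(z)=e$ and $\pi(w)=f$.

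For the norm bounds, I expect this to be the only mildly delicate step. Since $x,y\ge0$, we have $-y\le x-y\le x$. The inequality $z\le x$ is not automatic from $h\le x$ in a noncommutative algebra, so I would compare norms directly. We have $\|z\|=\max(\sup\sigma(h),0)$, and
\[
\sup\sigma(h)=\sup_{\varphi}\varphi(h)\le\sup_{\varphi}\varphi(x)\le\|x\|,
\]
where $\varphi$ ranges over states (of the unitization if needed) and we use $\varphi(y)\ge0$. Hence $\|z\|\le\|x\|$. Symmetrically, $\|w\|=\max(-\inf\sigma(h),0)$, and $-\inf\sigma(h)=\sup_{\varphi}\varphi(y-x)\le\|y\|$, so $\|w\|\le\|y\|$.
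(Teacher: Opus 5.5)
Your proof is correct and follows essentially the same route as the paper: functional calculus applied to $x-y$, compatibility of that calculus with $\pi$, and a pointwise (Gelfand/character) computation in $C^*(e,f)$ giving $(e-f)_+=e$ and $(e-f)_-=f$. For the norm bounds both arguments use $\sup\sigma(x-y)\le\|x\|$ and its analogue for $y-x$; the only cosmetic difference is that you derive this via states, whereas the paper reads it off from $x-y\le x\le\|x\|\,1$.
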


\begin{proof}
That $z,w\geq0$ and $zw=0$ is standard. By the compatibility of the
continuous functional calculus with $*$-homomorphisms,
$\pi(z)=(\pi(x-y))_+=(e-f)_+$ and $\pi(w)=(e-f)_-$. Since $e,f\geq0$
commute and $ef=0$, every character $\chi$ of the commutative
$C^*$-algebra $C^*(e,f)$ satisfies $\chi(e)\chi(f)=0$, so
$(\chi(e)-\chi(f))_+=\chi(e)$ and $(\chi(e)-\chi(f))_-=\chi(f)$. Hence
$(e-f)_+=e$ and $(e-f)_-=f$. Finally, $x-y\leq x$ gives
$\sigma(x-y)\subseteq(-\infty,\|x\|]$, hence $\|z\|\leq\|x\|$, and
$y-x\leq y$ gives $\|w\|=\|(y-x)_+\|\leq\|y\|$.
\end{proof}

\begin{theorem}\label{thm:diam}
Let $H$ be an infinite-dimensional Hilbert space. Then
\[
\Gamma_{+}(B(H))
\quad\text{and}\quad
\Gamma_{+}\bigl(B(H)/K(H)\bigr)
\]
are connected graphs and
\[
\operatorname{diam}\Gamma_{+}(B(H))=4,
\qquad
\operatorname{diam}\Gamma_{+}\bigl(B(H)/K(H)\bigr)=3.
\]
\end{theorem}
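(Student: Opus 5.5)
For $B(H)$ we first describe the graph in concrete terms. If $a\ge0$ then $\ker a=(\operatorname{ran}a)^{\perp}$. Hence a nonzero positive $a$ is a zero divisor exactly when $\ker a\neq0$. Moreover, for $a,b\ge0$ we have $ab=0$ if and only if $\overline{\operatorname{ran}}\,b\subseteq\ker a$.

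For the upper bound, let $a,b\in Z_+(B(H))$ and pick unit vectors $u\in\ker a$ and $v\in\ker b$. Since $\dim H\ge3$, there is a unit vector $w\perp u,v$. Let $p_x$ denote the rank-one projection onto $\mathbb C x$. Then
\[
a - p_u - p_w - p_v - b
\]
is a walk, so $\Gamma_+(B(H))$ is connected and $d(a,b)\le4$. If some of these vertices coincide, the path only gets shorter.

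For the lower bound, choose unit vectors $u,v$ that are neither parallel nor orthogonal. Let $a,b\ge0$ satisfy $\ker a=\mathbb C u$ and $\ker b=\mathbb C v$; for instance $a=1-p_u$ and $b=1-p_v$. The neighbours of $a$ are exactly the positive operators with range in $\mathbb C u$, namely $t p_u$ with $t>0$, and similarly the neighbours of $b$ are $s p_v$.
\begin{itemize}
\item $d(a,b)\neq1$, because $ab=0$ would force $\operatorname{ran} b\subseteq\mathbb C u$, which is false.
\item $d(a,b)\neq2$, because a common neighbour would need nonzero range inside $\ker a\cap\ker b=0$.
\item $d(a,b)\neq3$, because $t p_u\cdot s p_v=0$ would force $u\perp v$.
\end{itemize}
Hence $d(a,b)=4$, and $\operatorname{diam}\Gamma_+(B(H))=4$.

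For the Calkin algebra $Q=B(H)/K(H)$, write $\pi$ for the quotient map. Let $e_1,e_2\in Z_+(Q)$ with nonzero positive partners $f_1,f_2$, so $e_if_i=0$. By Lemma~\ref{lem:lift} we lift each pair to positive $x_i,y_i\in B(H)$ with $x_iy_i=0$, $\pi(x_i)=e_i$ and $\pi(y_i)=f_i$. In particular $y_i$ is noncompact.

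A noncompact positive operator has, for some $\varepsilon>0$, an infinite-rank spectral projection $\chi_{[\varepsilon,\infty)}(y_i)$. Its range $H_i$ is infinite dimensional and satisfies $H_i\subseteq\overline{\operatorname{ran}}\,y_i\subseteq\ker x_i$. By the first lemma of this section there are infinite-dimensional closed subspaces $L\subseteq H_1$ and $F\subseteq H_2$ with $L\perp F$.

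Then $p_Lx_1=0$, $p_Lp_F=0$ and $p_Fx_2=0$. Also $\pi(p_L)$ and $\pi(p_F)$ are nonzero, since infinite-rank projections are noncompact. Therefore
\[
e_1-\pi(p_L)-\pi(p_F)-e_2
\]
is a walk in $\Gamma_+(Q)$. Each of $\pi(p_L)$ and $\pi(p_F)$ is a vertex because it is orthogonal to the other. So $\Gamma_+(Q)$ is connected with diameter at most $3$.

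For the lower bound, write $H=H_1\oplus H_2\oplus H_3$ with each $H_j$ infinite dimensional. Put $e=\pi(p_{H_1\oplus H_2})$ and $e'=\pi(p_{H_2\oplus H_3})$. By the second lemma of this section, both are vertices.
\begin{itemize}
\item They are not adjacent, since $ee'=\pi(p_{H_2})\neq0$.
\item They have no common neighbour. If $g\ge0$ satisfies $ge=ge'=0$, then $g(e+e')=0$. But $e+e'=1+\pi(p_{H_2})$ is invertible, so $g=0$.
\end{itemize}
Hence $d(e,e')=3$, and $\operatorname{diam}\Gamma_+(Q)=3$.

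The main obstacle is the step that produces infinite-dimensional subspaces inside $\ker x_i$ from the noncompactness of $y_i$. The orthogonal lift from Lemma~\ref{lem:lift}, combined with spectral projections, handles this. Without that lift, the partners $f_i$ would only be orthogonal to $e_i$ modulo compacts, and the kernel argument would fail.
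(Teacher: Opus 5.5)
Your proof is correct and follows the same overall plan as the paper: orthogonal lifting via Lemma~\ref{lem:lift} together with the first lemma for the Calkin upper bound, a three-summand decomposition $H=H_1\oplus H_2\oplus H_3$ for the Calkin lower bound, and the pair $1-p_u$, $1-p_v$ with $u,v$ neither parallel nor orthogonal for the $B(H)$ lower bound. You streamline two steps, both validly: your single walk $a,p_u,p_w,p_v,b$ with $w\perp u,v$ replaces the paper's case analysis (Steps~1--3) for the $B(H)$ upper bound, and your observation that $e+e'=\pi(1+p_{H_2})$ is invertible replaces the paper's lifting argument showing that any common annihilator of $e,e'$ must be compact.
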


\begin{proof}
Put
\[
A=B(H)/K(H)
\]
and let
\[
\pi:B(H)\longrightarrow A
\]
be the quotient map.

Let $S,T\in Z_{+}(A)$ be arbitrary. Since $S$ and $T$ are positive
zero divisors, there exist nonzero positive elements
\[
S_{1},T_{1}\in A
\]
such that
\[
SS_{1}=0,
\qquad
TT_{1}=0.
\]
By Lemma~\ref{lem:lift} (applied to positive lifts of $S,S_1$ and of $T,T_1$), there exist nonzero positive operators
\[
\widetilde S,\widetilde S_{1},
\widetilde T,\widetilde T_{1}\in B(H)
\]
lifting respectively $S,S_{1},T,T_{1}$ and satisfying
\[
\widetilde S\widetilde S_{1}=0,
\qquad
\widetilde T\widetilde T_{1}=0.
\]
None of these four operators is compact, since their images in the
Calkin algebra are nonzero.

It follows that
\[
\operatorname{ran}\widetilde S_{1}
\subseteq
\ker\widetilde S,
\qquad
\operatorname{ran}\widetilde T_{1}
\subseteq
\ker\widetilde T.
\]
A noncompact operator cannot have finite-dimensional range. Hence
\[
\operatorname{ran}\widetilde S_{1}
\quad\text{and}\quad
\operatorname{ran}\widetilde T_{1}
\]
are infinite-dimensional, and therefore
\[
\ker\widetilde S
\quad\text{and}\quad
\ker\widetilde T
\]
are infinite-dimensional.

By the first lemma, there exist infinite-dimensional closed subspaces
\[
L\subseteq\ker\widetilde S,
\qquad
F\subseteq\ker\widetilde T
\]
such that
\[
L\perp F.
\]
Let $p_L$ and $p_F$ denote the orthogonal projections onto $L$ and
$F$, respectively. Since $L$ and $F$ are infinite-dimensional, both
projections are noncompact. Moreover,
\[
\widetilde S p_L=0,
\qquad
p_Lp_F=0,
\qquad
p_F\widetilde T=0.
\]

Let
\[
\pi_L=\pi(p_L),
\qquad
\pi_F=\pi(p_F)
\]
denote their images in the Calkin algebra.

Because $F\subseteq L^\perp$ and $L\subseteq F^\perp$, both
$L^\perp$ and $F^\perp$ are infinite-dimensional. Hence, by the
second lemma, $\pi_L$ and $\pi_F$ are nonzero positive zero divisors
in $A$. Furthermore,
\[
S\pi_L=0,
\qquad
\pi_L\pi_F=0,
\qquad
\pi_FT=0.
\]
Thus
\[
S,\ \pi_L,\ \pi_F,\ T
\]
gives a walk of length $3$ from $S$ to $T$ (consecutive entries are distinct, because a positive element whose square vanishes is zero). Hence
\[
\operatorname{diam}\Gamma_{+}(A)\leq3.
\]

We now show that the diameter is strictly greater than $2$.

Choose an orthogonal decomposition
\[
H=E\oplus F\oplus G,
\]
where $E,F,G$ are infinite-dimensional closed subspaces. Define
\[
H_{0}=E\oplus F,
\qquad
H_{1}=E\oplus G,
\qquad
H_{2}=G.
\]
Each of these subspaces is infinite-dimensional and has
infinite-dimensional orthogonal complement.

Let $p_0,p_1,p_2$ denote the corresponding orthogonal projections.
By the second lemma, the images
\[
S=\pi(p_0),
\qquad
T=\pi(p_1)
\]
are nonzero positive zero divisors in the Calkin algebra.

We have
\[
H_{0}\cap H_{1}=E,
\qquad
H_{0}+H_{1}=H,
\qquad
H_{2}\subseteq H_{1}.
\]
Consequently,
\[
1=p_0+p_2
\]
and
\[
p_1p_2=p_2p_1=p_2.
\]

Suppose that $G_0\in B(H)$ satisfies
\[
G_0p_0\in K(H)
\qquad\text{and}\qquad
G_0p_1\in K(H).
\]
Then
\[
G_0p_2
=
G_0p_1p_2
\]
is compact, since $K(H)$ is a two-sided ideal in $B(H)$. Hence
\[
G_0
=
G_0(p_0+p_2)
=
G_0p_0+G_0p_2
\]
is compact.

On the other hand,
\[
p_0p_1
\]
is the orthogonal projection onto the infinite-dimensional space
\[
H_{0}\cap H_{1}=E,
\]
and is therefore not compact. Hence
\[
ST=\pi(p_0p_1)\neq0.
\]

Now suppose that there were a nonzero positive element $G_0\in A$
satisfying
\[
G_0S=G_0T=0.
\]
Choose a positive lift $\widetilde G_0\in B(H)$ of $G_0$. Then
\[
\widetilde G_0p_0
\quad\text{and}\quad
\widetilde G_0p_1
\]
are compact, since their images in the Calkin algebra are
respectively $G_0S=0$ and $G_0T=0$. By the preceding argument,
$\widetilde G_0$ is compact. Thus $G_0=0$, a contradiction.

Therefore $S$ and $T$ have no common nonzero positive neighbor.
Since $ST\neq0$, they are not adjacent, and since they have no
common neighbor, they cannot be joined by a path of length $2$.
Hence
\[
d(S,T)>2.
\]
Together with the upper bound, this gives
\[
\operatorname{diam}\Gamma_{+}(A)=3.
\]

It remains to prove the assertion for $B(H)$. In this part $H$ may be
any Hilbert space with $\dim H\geq3$, finite or infinite; we shall use
this in the proof of Corollary~\ref{cor:factors}. Note that a positive
operator $S\in B(H)$ is a zero divisor if and only if
$\ker S\neq\{0\}$. Indeed, if $0\neq\xi\in\ker S$, the rank-one
projection $p_\xi$ onto $\mathbb C\xi$ satisfies $Sp_\xi=0$;
conversely, if $0\neq R\in B(H)$ satisfies $SR=0$, then
$\operatorname{ran}R\subseteq\ker S$. Moreover, $Sp=0$ for every
projection $p$ onto a subspace of $\ker S$.

Let $S,T\in Z_{+}(B(H))$ and put
\[
M=\ker S,\qquad N=\ker T,
\]
both nonzero closed subspaces.

\emph{Step 1.} If $M\cap N\neq\{0\}$, let $p$ be the projection onto
$M\cap N$. Then $Sp=pT=0$, so $S,p,T$ is a path of length at most $2$.

\emph{Step 2.} Suppose there are unit vectors $e\in M$, $f\in N$ with
$e\perp f$, and let $p_e$, $p_f$ be the rank-one projections onto
$\mathbb Ce$, $\mathbb Cf$. Then
\[
Sp_e=0,\qquad p_ep_f=0,\qquad p_fT=0,
\]
so $S,p_e,p_f,T$ is a path of length at most $3$.

\emph{Step 3.} Suppose that neither Step~1 nor Step~2 applies. If
$\dim M\geq2$, pick a unit vector $f\in N$; then $M\cap f^\perp$ has
codimension at most $1$ in $M$, hence contains a unit vector $e$, and
Step~2 applies. The same holds if $\dim N\geq2$. Hence
$M=\mathbb Ce$ and $N=\mathbb Cf$ are one-dimensional, with $e,f$ unit
vectors which are neither orthogonal nor proportional. Let $q$ be the
projection onto $\{e,f\}^\perp$, which is nonzero because
$\dim H\geq3$. Then
\[
Sp_e=0,\qquad p_eq=0,\qquad qp_f=0,\qquad p_fT=0,
\]
so $S,p_e,q,p_f,T$ is a path of length $4$.

Thus $\Gamma_{+}(B(H))$ is connected and
$\operatorname{diam}\Gamma_{+}(B(H))\leq4$.

To see that the diameter is at least $4$, choose two linearly
independent, non-orthogonal unit vectors $e,f\in H$, let $p_e,p_f$ be the
rank-one projections onto $\mathbb Ce,\mathbb Cf$, and put
\[
S=1-p_e,\qquad T=1-p_f.
\]
Then $S,T\in Z_{+}(B(H))$, since $\ker S=\mathbb Ce$ and
$\ker T=\mathbb Cf$. If $G\geq0$ and $GS=0$, then $G$ vanishes on
$\operatorname{ran}S=e^\perp$, so $G=Gp_e=p_eG$ and hence $G=\lambda p_e$
with $\lambda>0$ (for $G\neq0$). Thus the neighbours of $S$ are exactly the operators
$\lambda p_e$ ($\lambda>0$), and likewise the neighbours of $T$ are exactly
the operators $\mu p_f$ ($\mu>0$). Now:
\begin{itemize}
\item $S$ and $T$ are not adjacent: $ST=0$ would give
$f^\perp=\operatorname{ran}T\subseteq\ker S=\mathbb Ce$, which is
impossible because $\dim f^\perp\geq2$;
\item they have no common neighbour, since $\lambda p_e\neq\mu p_f$;
\item there is no path $S,x,y,T$ of length $3$: it would need
$x=\lambda p_e$, $y=\mu p_f$ and $xy=0$, i.e.\ $p_ep_f=0$, i.e.\
$e\perp f$, which was excluded.
\end{itemize}
Hence $d(S,T)\geq4$. Together with the upper bound, this gives
$\operatorname{diam}\Gamma_{+}(B(H))=4$.
\end{proof}

\begin{corollary}\label{cor:factors}
Let $A$ be a von Neumann factor. If $A\not\simeq M_{2}(\mathbb C)$
and $A\neq\mathbb C$, then $\Gamma_{+}(A)$ is connected and
\[
\operatorname{diam}\Gamma_{+}(A)\leq4.
\]
If, in addition, $A$ is not of type~I (equivalently, $A$ has no minimal
projection), then $\operatorname{diam}\Gamma_{+}(A)\leq3$.
\end{corollary}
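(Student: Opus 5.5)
The plan is to mimic the $B(H)$ argument inside the factor, replacing kernels by support projections. For a positive $S\in A$, let $N(S)=1-s(S)$ be its null projection, where $s(S)$ is the range projection; it lies in $A$ because $A$ is a von Neumann algebra. First I will check that $S\in Z_+(A)$ if and only if $N(S)\neq0$. If $SR=0$ with $R\neq0$, then the range projection of $R$ is contained in $N(S)$. Conversely, $S\,N(S)=0$. Moreover $Sp=0$ for every projection $p\le N(S)$. The case of a type~I factor $A\cong B(K)$ is already handled by Theorem~\ref{thm:diam}: there $\dim K\ge3$ because $A\neq\mathbb C,M_2(\mathbb C)$, and the diameter is at most $4$. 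So it remains to treat factors that are not of type~I, where I aim for the bound $3$, and this bound implies the first assertion as well.

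Let $A$ be a factor with no minimal projections, and let $S,T\in Z_+(A)$ with $M=N(S)$ and $N=N(T)$, both nonzero. The goal is to find nonzero projections $p\le M$ and $q\le N$ with $pq=0$. Then $S,p,q,T$ is a walk of length $3$. Consecutive entries are distinct, since a nonzero positive element cannot be orthogonal to itself. A walk of length $3$ contains a path of length at most $3$, which gives $d(S,T)\le3$. Each intermediate projection is itself a zero divisor, because $Sp=0$ and $qT=0$.

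To produce $p$ and $q$, I will use that $A$ has no minimal projections. This lets me split $M=p_1+p_2$ with $p_1,p_2$ nonzero, orthogonal, and $p_1\sim p_2$: halving is possible in any non-type-I factor, and at worst I take two nonzero orthogonal subprojections and apply the comparison theorem to shrink one to be equivalent to a subprojection of the other. Now compare the projections $N$ and $1-p_1$. If $N\wedge(1-p_1)\neq0$, take $p=p_1$ and $q=N\wedge(1-p_1)$, and we are done. Otherwise $N\wedge(1-p_1)=0$. Kaplansky's formula $N-N\wedge(1-p_1)\sim p_1\vee(1-N)-(1-N)$ then gives $N\precsim p_1$. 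By symmetry we may also assume $N\wedge(1-p_2)=0$, so $N\precsim p_2$.

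The main obstacle is turning these "generic position" relations into a contradiction or into a usable pair. My first attempt is to halve $N$ instead. Write $N=q_1+q_2$ and $q_1=N\wedge\cdots$; note that $N\wedge(1-p_1)=0$ forces the compression map $q\mapsto p_1 q$ to be injective on $N$. Inside the two-projection algebra generated by $p_1$ and $N$ (Halmos' decomposition), there is a nonzero subprojection $q\le N$ whose range equals the image under $N$ of a proper piece. Concretely, take a nonzero proper subprojection $p\le p_1$; this exists since there are no minimal projections. Then look at $q=N\wedge(1-p)$. Dimension counting, in the form of the same Kaplansky parallelogram law, gives $N-q\precsim p$. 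So if $q=0$, then $N\precsim p$ for every nonzero $p\le p_1$. Choosing $p$ arbitrarily small, in the sense of a strictly decreasing sequence with orthogonal differences, together with halving $N$ itself, this will contradict $N\neq0$ once I use that in a factor without minimal projections any nonzero projection has a nonzero subprojection not $\precsim$ a given tiny one. Some care is needed in type~II$_1$, where trace comparison finishes directly by taking $\tau(p)<\tau(N)$, and in type~III, where all nonzero projections are equivalent to each other. In type~III, I would instead use a different route: pick $p\le M$ and build $q\le N$ orthogonal to $p$ via a partial isometry argument. I expect this case analysis to be the delicate part.
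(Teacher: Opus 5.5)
Your reduction is sound and agrees with the paper's. The type~I case is Theorem~\ref{thm:diam} for $B(K)$ with $\dim K\ge3$. For a factor without minimal projections it suffices to find nonzero projections $p\le M$, $q\le N$ with $pq=0$.

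There is a small slip in the middle. The Kaplansky relation you display is a tautology, because $p_1\vee(1-N)-(1-N)=N-N\wedge(1-p_1)$. The form you want is $N-N\wedge(1-p_1)\sim N\vee(1-p_1)-(1-p_1)\le p_1$, and this does give $N\precsim p_1$.

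\textbf{The gap.} You never produce $p,q$ in the remaining case $N\wedge(1-p_1)=N\wedge(1-p_2)=0$. Comparison can close this case only where a trace is available. In type~II$_1$, picking $p\le M$ with $0<\tau(p)<\tau(N)$ works, and in type~II$_\infty$ a finite $p$ of small trace works. In a countably decomposable type~III factor, however, all nonzero projections are equivalent. So $N\precsim p$ holds for every nonzero $p$, and the relations you derive carry no information. Your claim that every nonzero projection has a nonzero subprojection not $\precsim$ a given tiny one is false there. The deferred ``partial isometry argument'' is therefore exactly the missing step. It cannot be a pure comparison argument, because $p$ must be chosen in a way adapted to $N$. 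Already in $B(H)$ there are pairs $M,N$ with $M\wedge(1-N)=0$ and a proper nonzero $p\le M$ with $N\wedge(1-p)=0$. For example, take $M$ the projection onto $K\oplus0$ and $N$ the projection onto the graph of an injective positive $T$ with non-closed range. Then take $p$ with $\operatorname{ran}(M-p)$ a closed subspace meeting $\operatorname{ran}T$ trivially.

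\textbf{The missing idea, used in the paper.} Choose $p$ commuting with the compression of $N$.
\begin{itemize}
\item If $M\wedge(1-N)\neq0$, take $p=M\wedge(1-N)$ and $q=N$.
\item Otherwise put $a=MNM\in MAM$ and choose nonzero orthogonal projections $e,e'\le M$ commuting with $a$.
\begin{itemize}
\item If $\sigma(a)$ has at least two points, take the spectral projections $\chi_{U_1}(a),\chi_{U_2}(a)$ for disjoint open sets $U_1,U_2$ meeting $\sigma(a)$.
\item If $a$ is a scalar multiple of $M$, take any decomposition $M=e+e'$. This is the only place where the absence of minimal projections enters.
\end{itemize}
\item Then $eNe'=eae'=0$. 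So the range projection $f$ of $Ne'$ satisfies $f\le N$ and $ef=0$.
\item Moreover $f\neq0$, since $Ne'=0$ would give $e'\le M\wedge(1-N)=0$.
\end{itemize}
Thus $p=e$, $q=f$ works uniformly in types~II and~III, without any comparison theory.
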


\begin{proof}
A type~I factor is $*$-isomorphic to $B(K)$ for some Hilbert space $K$,
and $\dim K\geq3$ because $A\neq\mathbb C$ and $A\not\simeq M_2(\mathbb C)$.
Since $\Gamma_{+}$ is invariant under $*$-isomorphisms, the part of the
proof of Theorem~\ref{thm:diam} concerning $B(H)$, which is valid for every
Hilbert space of dimension at least $3$, shows that
$\Gamma_{+}(A)$ is connected with
$\operatorname{diam}\Gamma_{+}(A)=4$.

It remains to consider a factor $A$ which is not of type~I. Then $A$ has no
minimal projections, and we regard $A$ as a von Neumann algebra acting on a
Hilbert space $H$. We use the standard fact that, for every $R\in A$, the
orthogonal projections onto $\ker R$ and onto
$\overline{\operatorname{ran}R}$ belong to $A$; see \cite[p.~19]{OLSEN}.
Recall also that the projections of $A$ form a complete lattice, so that
meets and joins of projections of $A$ are again in $A$.

Let $S,T\in Z_{+}(A)$ and let $p,q\in A$ be the projections onto $\ker S$
and $\ker T$. Then $p,q\neq0$ (since $S,T$ are zero divisors) and
$p,q\neq1$ (since $S,T\neq0$). Every projection $e\leq p$ satisfies $Se=0$,
and every projection $e\leq q$ satisfies $Te=0$. We show that
$d(S,T)\leq3$.

\emph{Case A: $p\wedge(1-q)\neq0$.} Put $e=p\wedge(1-q)$. Then
$Se=0$, $eq=0$ and $qT=0$, so $S,e,q,T$ is a path of length at most $3$.

\emph{Case B: $p\wedge(1-q)=0$.} Put $a=pqp$, a positive element of the von
Neumann algebra $pAp$ (with unit $p$). We claim that there are nonzero
orthogonal projections $e,e'\in pAp$ which commute with $a$. If
$\sigma(a)$, computed in $pAp$, contains at least two points, choose
disjoint open sets $U_1,U_2\subset\mathbb R$ each meeting $\sigma(a)$ and
let $e=\chi_{U_1}(a)$, $e'=\chi_{U_2}(a)$; these are nonzero, orthogonal, and
commute with $a$. If $\sigma(a)=\{\lambda\}$, then $a=\lambda p$ commutes
with everything in $pAp$, and, since $A$ has no minimal projection, we can
write $p=e+e'$ with $e,e'$ nonzero orthogonal projections.

Now $e,e'\leq p$, so $e=ep=pe$ and $e'=pe'=e'p$, whence
\[
eqe'=e\,pqp\,e'=eae'=a\,ee'=0.
\]
Let $f\in A$ be the projection onto $\overline{\operatorname{ran}(qe')}$.
Then $f\leq q$, so $Tf=0$, and $eqe'=0$ means that
$\operatorname{ran}(qe')\subseteq\ker e$, i.e.\ $fe=0$. Moreover
$f\neq0$: if $qe'=0$, then $\operatorname{ran}e'\subseteq\ker q$, so
$e'\leq p\wedge(1-q)=0$, a contradiction. Since $e\leq p$ gives $Se=0$,
we obtain the path $S,e,f,T$ of length at most $3$.

Thus every two vertices of $\Gamma_{+}(A)$ can be joined by a path of
length at most $3$, so $\Gamma_{+}(A)$ is connected and
$\operatorname{diam}\Gamma_{+}(A)\leq3$.
\end{proof}

\begin{proposition}
The graph $\Gamma_{+}(M_{2}(\mathbb C))$ is disconnected.
\end{proposition}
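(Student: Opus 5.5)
The plan is to describe $Z_+(M_2(\mathbb C))$ and its adjacency explicitly, and then show that every connected component is tiny. I will reuse the observation from the proof of Theorem~\ref{thm:diam}, which holds for any finite-dimensional $H$: a positive $S\in M_2(\mathbb C)$ is a zero divisor exactly when $\ker S\neq\{0\}$. For a nonzero $S$ this means $\ker S$ is one-dimensional, so $S$ has rank one. Writing $S$ via the spectral theorem, every vertex therefore has the form $S=\lambda p_e$, with $\lambda>0$ and $p_e$ the rank-one projection onto $\mathbb Ce$ for some unit vector $e\in\mathbb C^2$.

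Next I will determine adjacency. Let $S=\lambda p_e$ and $T=\mu p_f$ be two vertices. Then $ST=\lambda\mu\,\langle f,e\rangle\,|e\rangle\langle f|$, so $ST=0$ if and only if $e\perp f$. In $\mathbb C^2$ the orthogonal complement of $\mathbb Ce$ is the single line $\mathbb Ce^\perp$. Hence the neighbours of $\lambda p_e$ are exactly the elements $\mu p_{e^\perp}$ with $\mu>0$, where $p_{e^\perp}=1-p_e$.

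From this the components are immediate. The set $C_e=\{\lambda p_e:\lambda>0\}\cup\{\mu(1-p_e):\mu>0\}$ is closed under taking neighbours, because the neighbours of $\mu(1-p_e)$ are again the positive multiples of $p_e$. So $C_e$ is a union of connected components; in fact it is a complete bipartite component. To finish, I exhibit two vertices in different components. Take $S=p_{e_1}$ and $T=p_{(e_1+e_2)/\sqrt2}$. The line spanned by $(e_1+e_2)/\sqrt2$ is neither $\mathbb Ce_1$ nor $\mathbb Ce_2$, so $T\notin C_{e_1}$. Therefore no path joins $S$ to $T$, and $\Gamma_+(M_2(\mathbb C))$ is disconnected.

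The only step that needs any care is the claim that a nonzero positive zero divisor has rank exactly one. This uses two things: the kernel of $S$ is nonzero, and $S\neq0$ excludes a two-dimensional kernel. Beyond that, the argument is routine linear algebra.
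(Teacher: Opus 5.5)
Your proof is correct and follows essentially the same route as the paper: both show that every vertex is a positive multiple $\lambda p$ of a rank-one projection, that $\lambda p$ is adjacent exactly to the positive multiples of $1-p$, and that each component is therefore $\{\lambda p:\lambda>0\}\cup\{\mu(1-p):\mu>0\}$. The differences are cosmetic: you get rank one from the kernel criterion rather than by diagonalizing, and you exhibit two explicit vertices in different components where the paper notes that there are infinitely many components.
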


\begin{proof}
Every positive matrix is unitarily equivalent to a diagonal matrix
with nonnegative diagonal entries. A positive zero divisor in
$M_{2}(\mathbb C)$ is therefore of the form
\[
a=\lambda p,
\]
where $\lambda>0$ and $p$ is a rank-one projection.

It is consequently enough to consider the graph of rank-one
projections. A rank-one projection $p$ is the orthogonal projection
onto a one-dimensional subspace $L_p$ of $\mathbb C^2$. For rank-one
projections $p$ and $q$,
\[
pq=0
\]
if and only if
\[
L_p\perp L_q.
\]

In $\mathbb C^2$, if $L_p$ and $L_q$ are distinct and orthogonal,
then $L_q=L_p^\perp$. Hence the connected component containing
$\lambda p$, for $\lambda>0$, consists precisely of the vertices
\[
\{\lambda p:\lambda>0\}
\cup
\{\mu(1-p):\mu>0\}.
\]
Since there are infinitely many one-dimensional subspaces of
$\mathbb C^2$, there are infinitely many such components. Thus
\[
\Gamma_{+}(M_{2}(\mathbb C))
\]
is disconnected.
\end{proof}

\begin{remark}
The preceding results illustrate a distinction between the positive
zero-divisor graph of a $C^*$-algebra and the usual zero-divisor graph
of a commutative ring. Recall that, for a commutative ring $R$, the
zero-divisor graph $\Gamma(R)$ is connected whenever it is nonempty,
and

$$
\operatorname{diam}\Gamma(R)\leq 3;
$$

moreover, this bound is sharp \cite{AndersonLivingston1999}. In contrast, $\Gamma_+(B(H))$ has diameter $4$ (Theorem~\ref{thm:diam}), so the bound $3$ of the commutative ring case fails for the positive zero-divisor graph.

In the present setting, the vertices are restricted to positive
elements. Thus, although the adjacency relation is again given by
orthogonality,

$$
a\sim b\quad\Longleftrightarrow\quad ab=0,
$$

the positivity requirement may exclude intermediate vertices that are
available in the classical zero-divisor graph. On the other hand, in
the Calkin algebra and in $B(H)$, the existence of sufficiently large
orthogonal subspaces allows one to connect arbitrary positive
zero-divisors by short paths. For factors, the presence of projections
in the von Neumann algebra provides a corresponding mechanism.

This comparison naturally raises the following questions: For which
$C^*$-algebras $A$ is $\Gamma_+(A)$ connected? More generally, is
there a universal constant $N$ such that

$$
\operatorname{diam}\Gamma_+(A)\leq N
$$

for every $C^*$-algebra $A$ for which $\Gamma_+(A)$ is connected?
These questions indicate how the order structure of a $C^*$-algebra
can affect the geometry of its positive zero-divisor graph.
\end{remark}

\section{Approximation and rank theory arising from zero-divisors}

The preceding results concern exact zero-divisor configurations. We now
turn to an approximation problem and introduce a rank invariant which
measures how densely non-zero-divisors occur among sums of squares of
self-adjoint elements. Our definition is motivated by the role of real
rank in the theory of $C^*$-algebras. Real rank was introduced by
Brown and Pedersen \cite{BrownPedersen}, while stable rank was developed,
in a different direction, by Rieffel \cite{RIEFFEL}. The invariant
introduced below replaces invertibility in the definition of real rank
by the weaker condition of being a non-zero-divisor.

\begin{defn}
Let $A$ be a unital $C^*$-algebra and let $n\geq0$. We say that
\[
ZDRR(A)\leq n
\]
if, for every $(n+1)$-tuple
\[
(a_1,\ldots,a_{n+1})\in A_{\mathrm{sa}}^{\,n+1}
\]
and every $\varepsilon>0$, there exists a tuple
\[
(b_1,\ldots,b_{n+1})\in A_{\mathrm{sa}}^{\,n+1}
\]
such that
\[
\|a_i-b_i\|<\varepsilon,\qquad i=1,\ldots,n+1,
\]
and
\[
b_1^2+\cdots+b_{n+1}^2
\]
is a non-zero-divisor in $A$, that is, it is not annihilated by any
nonzero element of $A$. Since this element is self-adjoint, the
vanishing of its left and right annihilators are equivalent.
\end{defn}

\begin{defn}
We define
\[
ZDRR(A)=0
\]
if
\[
ZDRR(A)\leq0.
\]
For $n\geq1$, we define
\[
ZDRR(A)=n
\]
if
\[
ZDRR(A)\leq n
\]
and
\[
ZDRR(A)\not\leq n-1.
\]
If there is no finite integer $n$ for which $ZDRR(A)\leq n$, we set
\[
ZDRR(A)=\infty.
\]
\end{defn}

The definition should be compared with the usual definition of real
rank. Recall that $RR(A)\leq n$ if every $(n+1)$-tuple of self-adjoint
elements of $A$ can be approximated arbitrarily closely (equivalently,
within any prescribed coordinatewise norm tolerance) by a tuple
$(b_1,\ldots,b_{n+1})$ such that
\[
b_1^2+\cdots+b_{n+1}^2
\]
is invertible. Thus the distinction between the two invariants is that,
in the definition of $ZDRR$, invertibility is replaced by the weaker
condition of being a non-zero-divisor. Consequently,
\[
ZDRR(A)\leq RR(A).
\]

In the commutative setting this distinction has a direct topological
interpretation. If $X$ is a compact Hausdorff space and
$f\in C(X)$, then $f$ is a non-zero-divisor precisely when its zero set
has empty interior. Hence, for $A=C(X)$, the condition
$ZDRR(C(X))\leq n$ asks whether every $(n+1)$-tuple of real-valued
continuous functions can be approximated by a tuple whose common zero
set has empty interior.

This leads to a natural question connecting the zero-divisor rank with
classical dimension theory.

\begin{question}
For which compact Hausdorff spaces $X$ do we have
\[
ZDRR(C(X))=RR(C(X))?
\]
Equivalently, for which $X$ does
\[
ZDRR(C(X))=\dim X?
\]
\end{question}

The equality in the question does not hold for arbitrary compact
Hausdorff spaces. Indeed, if $X$ is a separable
compact Hausdorff space, then $ZDRR(C(X))=0$ (Proposition~\ref{prop:sep}
below; this fact was first noted in \cite{Taghavi2013}). Thus, for
$X=[0,1]$,

$$
ZDRR(C([0,1]))=0
\qquad\text{while}\qquad
RR(C([0,1]))=\dim[0,1]=1.
$$

Hence the equality $ZDRR(C(X))=RR(C(X))$ is a genuinely restrictive
topological property, and the question above suggests a broader
problem of identifying the compact Hausdorff spaces for which this
equality holds. The theorem below provides a substantial class of such
spaces, namely compact almost-$P$-spaces.

\begin{proposition}\label{prop:sep}
If $X$ is a separable compact Hausdorff space, then $ZDRR(C(X))=0$.
\end{proposition}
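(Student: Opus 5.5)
We need to show that for every real-valued $f\in C(X)$ and every $\varepsilon>0$ there is a real-valued $g\in C(X)$ with $\|f-g\|<\varepsilon$ such that $g^2$ is a non-zero-divisor. By the topological description recalled above, this means the zero set $Z(g)$ must have empty interior. The plan is to fix a countable dense set $D=\{x_1,x_2,\dots\}\subseteq X$, which exists by separability. We then perturb $f$ by a constant so that the perturbed function vanishes at no point of $D$. If $Z(g)\cap D=\emptyset$, then $Z(g)$ is a closed set missing a dense set, so it has empty interior: any nonempty open $U\subseteq Z(g)$ would have to meet $D$. Hence $g$, and therefore $g^2$, is a non-zero-divisor.

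The key step is the choice of the constant. Consider $g=f-c$ with $c\in\mathbb R$. Then $g(x_k)=0$ exactly when $c=f(x_k)$. The set $\{f(x_k):k\ge1\}$ is countable, so the interval $(-\varepsilon,\varepsilon)$ contains some $c$ outside it. For this $c$ we have $\|f-g\|=|c|<\varepsilon$, and $g$ vanishes at no point of $D$.

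To conclude, we check the algebraic translation. If $h\in C(X)$ satisfies $hg^2=0$, then $h$ vanishes on the dense open set $X\setminus Z(g)$. By continuity $h=0$. So $g^2$ is not annihilated by any nonzero element, which gives $ZDRR(C(X))\le 0$, that is, $ZDRR(C(X))=0$.

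There is no serious obstacle. The only point needing care is that the definition with $n=0$ involves a single self-adjoint $b_1$, and the non-zero-divisor condition applies to $b_1^2$, whose zero set equals $Z(b_1)$; this is handled above. Complex-valued functions play no role, since $A_{\mathrm{sa}}$ consists of the real-valued functions.
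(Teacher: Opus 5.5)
Your proof is correct and is essentially the paper's argument: perturb $f$ by a small constant chosen outside a countable set of bad values determined by the countable dense subset. The only cosmetic difference is this. The paper excludes the countably many levels $y$ for which $f^{-1}(y)$ has nonempty interior, using that these interiors are disjoint open sets each meeting $D$. You instead exclude the larger countable set $f(D)$, which makes $Z(f-c)$ miss $D$ outright.
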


\begin{proof}
Let $f\in C(X)$ be real valued and let $\varepsilon>0$. For $y\in\mathbb R$
let $U_y$ be the interior of $f^{-1}(y)$. The sets $U_y$ are pairwise
disjoint and open. If $\{d_n\}$ is a countable dense subset of $X$, every
nonempty $U_y$ contains some $d_n$, so $U_y\neq\varnothing$ for at most
countably many $y$. Choose $t\in(0,\varepsilon)$ with
$U_t=\varnothing$ and put $b=f-t\cdot1$. Then $\|f-b\|=t<\varepsilon$ and the
zero set of $b^2$ is $f^{-1}(t)$, which has empty interior. Hence $b^2$
is a non-zero-divisor, and $ZDRR(C(X))\le0$. (This is the argument of
\cite{Taghavi2013}.)
\end{proof}

Recall that a topological space $X$ is called an almost-$P$-space if
every nonempty $G_\delta$ subset of $X$ has nonempty interior.
The following observation gives the class of spaces needed below.
\begin{theorem}\label{thm:almostP}
Let $X$ be a compact Hausdorff almost-$P$-space. Then
\[
ZDRR(C(X))=RR(C(X))=\dim X.
\]
\end{theorem}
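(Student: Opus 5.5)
We have $ZDRR(C(X))\le RR(C(X))$ from the discussion after the definition. We also take as known the classical fact that $RR(C(X))=\dim X$ for compact Hausdorff $X$, due to Brown and Pedersen, with $\dim$ the covering dimension. So the only thing to prove is $ZDRR(C(X))\ge\dim X$. Equivalently, whenever $ZDRR(C(X))\le n$ we must show $RR(C(X))\le n$. The plan is to show that in an almost-$P$-space the conditions ``non-zero-divisor'' and ``invertible'' coincide for functions of the form $b_1^2+\dots+b_{n+1}^2$.

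The key step is elementary. Let $g\in C(X)$ be real valued and non-negative. Its zero set $Z(g)=\bigcap_k\{x:g(x)<1/k\}$ is a closed $G_\delta$ set. We just saw that $g$ is a non-zero-divisor in $C(X)$ exactly when $Z(g)$ has empty interior. Suppose $X$ is almost-$P$ and $g$ is a non-zero-divisor. If $Z(g)$ were nonempty, it would be a nonempty $G_\delta$ set and hence would have nonempty interior, which is a contradiction. So $Z(g)=\varnothing$. Since $X$ is compact, $g$ is then bounded below by a positive constant, and so $g$ is invertible in $C(X)$. Thus in $C(X)$ every non-zero-divisor is invertible. (This holds for any $g$, because $Z(g)=Z(|g|)$.)

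Now assume $ZDRR(C(X))\le n$. Take $a_1,\dots,a_{n+1}$ self-adjoint and $\varepsilon>0$. The definition gives $b_i$ with $\|a_i-b_i\|<\varepsilon$ such that $b_1^2+\dots+b_{n+1}^2$ is a non-zero-divisor. By the key step, this sum is invertible, so $RR(C(X))\le n$. Hence $RR(C(X))\le ZDRR(C(X))$, which also covers the infinite case. Together with the reverse inequality this gives $ZDRR(C(X))=RR(C(X))=\dim X$.

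The only real obstacle is the identification $RR(C(X))=\dim X$ for non-metrizable compact Hausdorff $X$. Almost-$P$-spaces without isolated points are never metrizable, so we need the general version. It is usually stated via the characterization of $\dim X\le n$ by approximating maps $X\to\mathbb R^{n+1}$ with maps avoiding $0$, which is valid for all normal spaces (Hurewicz--Wallman / Engelking). We would cite Brown--Pedersen for this and note that their proof is purely topological and does not use metrizability. Everything else is immediate from the almost-$P$ hypothesis and compactness.
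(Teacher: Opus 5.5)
Your proposal is correct and follows essentially the same route as the paper: on a compact almost-$P$-space, a positive non-zero-divisor of $C(X)$ has empty zero set, since a nonempty zero set is a $G_\delta$ and would have nonempty interior. Hence it is invertible, so the admissible tuples for $ZDRR$ and $RR$ coincide, and the formula $RR(C(X))=\dim X$ finishes the proof. The differences are minor: the paper proves the characterization (non-zero-divisor if and only if the zero set has empty interior) with a bump function, whereas you take it from the earlier discussion; and your observation that the non-metrizable version of $RR(C(X))=\dim X$ is needed is correct and a useful addition.
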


\begin{proof}
For $f\in C(X)_+$, multiplication by $f$ is injective on $C(X)$ if
and only if $Z(f)=f^{-1}(\{0\})$ has empty interior. Indeed, if
$Z(f)$ has nonempty interior, a nonzero continuous function supported
in that interior is annihilated by $f$; conversely, if $fg=0$ for a
nonzero $g$, then the open set $\{x:|g(x)|>0\}$ is contained in
$Z(f)$, so $Z(f)$ has nonempty interior.

Now let $f\in C(X)_+$ be a non-zero-divisor. Its zero set $Z(f)$ is a
closed $G_\delta$ subset of $X$. If $Z(f)$ were nonempty, the
almost-$P$ property would imply that $Z(f)$ has nonempty interior,
contrary to the fact that $f$ is a non-zero-divisor. Hence $Z(f)=\varnothing$,
so $f$ is invertible. The converse is immediate. Thus, on an
almost-$P$-space, a positive element of $C(X)$ is a non-zero-divisor
if and only if it is invertible.

Consequently, in the definition of $ZDRR(C(X))$ the admissible
approximating tuples are exactly the admissible tuples in the
definition of $RR(C(X))$. Therefore
\[
ZDRR(C(X))=RR(C(X)).
\]
Finally, the classical formula for compact Hausdorff spaces gives
$RR(C(X))=\dim X$, and the result follows.
\end{proof}

Here the equivalence follows from the classical formula
\[
RR(C(X))=\dim X
\]
for compact Hausdorff spaces $X$; see, for example,
\cite{BrownPedersen}. Thus the question asks when replacing
invertibility by the weaker non-zero-divisor condition leaves the real
rank unchanged. The results below give a substantial class of positive
answers.

In particular, for the compact almost-$P$-spaces considered below,
\[
ZDRR(C(X))=RR(C(X))=\dim X.
\]

\subsection{Levy's example and arbitrary finite values}

We now obtain examples realizing every positive integer as the
zero-divisor real rank. Let $\mathcal L$ be the Dedekind completion of
the $\eta_1$-set considered in Levy \cite{Levy1977} (this is not the
real line). By \cite[Proposition~2.3]{Levy1977}, $\mathcal L$ is a
connected, totally ordered almost-$P$-space and $\beta\mathcal L$ is its
two-point compactification. Put
\[
X=\beta\mathcal L .
\]
By \cite[Proposition~2.3]{Levy1977}, $X$ is a compact, connected,
totally ordered almost-$P$-space. In particular, $X$ is a nondegenerate
linearly ordered continuum.

We first note that the almost-$P$ property is preserved under finite
products.

\begin{lemma}\label{lem:prod}
If $X_1$ and $X_2$ are almost-$P$-spaces, then so is $X_1\times X_2$.
\end{lemma}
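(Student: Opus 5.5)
The plan is to verify the definition directly: take an arbitrary nonempty $G_\delta$ subset of $X_1\times X_2$ and find a nonempty open set inside it. The one point to watch is that a $G_\delta$ subset of a product need not itself be a product. So the first step will be to shrink it to a countable intersection of open boxes, to which the almost-$P$ property of each factor applies separately.

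Let $G=\bigcap_{n\ge1}U_n$ be a nonempty $G_\delta$ subset of $X_1\times X_2$, with each $U_n$ open, and fix a point $(x,y)\in G$. For each $n$, since $(x,y)\in U_n$ and the product topology has the open boxes as a base, we will choose open sets $A_n\subseteq X_1$ and $B_n\subseteq X_2$ with
\[
(x,y)\in A_n\times B_n\subseteq U_n .
\]
We then set
\[
P=\bigcap_{n\ge1}A_n\subseteq X_1,
\qquad
Q=\bigcap_{n\ge1}B_n\subseteq X_2 .
\]
These are $G_\delta$ sets in $X_1$ and $X_2$, and they are nonempty because $x\in P$ and $y\in Q$.

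By the almost-$P$ property of $X_1$ and $X_2$, the set $P$ has nonempty interior $V$ and the set $Q$ has nonempty interior $W$. Then $V\times W$ is a nonempty open subset of $X_1\times X_2$, and
\[
V\times W\subseteq P\times Q\subseteq\bigcap_{n\ge1}(A_n\times B_n)\subseteq\bigcap_{n\ge1}U_n=G .
\]
Hence $G$ has nonempty interior, and $X_1\times X_2$ is an almost-$P$-space.

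I expect no real obstacle once the reduction to boxes is made; it is the only step that needs care. The open set $V\times W$ need not contain the chosen point $(x,y)$, but the definition only requires the interior of $G$ to be nonempty, so this causes no problem. The argument uses no separation axioms. By induction it extends to all finite products, which is the form needed for $X^n$ with $X=\beta\mathcal L$.
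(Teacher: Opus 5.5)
Your proof is correct and follows the same approach as the paper's. Both arguments fix a point of $G$, choose open boxes $A_k\times B_k\subseteq U_k$ around it, apply the almost-$P$ property to the nonempty $G_\delta$ sets $\bigcap_k A_k$ and $\bigcap_k B_k$, and take the product of their interiors as a nonempty open subset of $G$.
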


\begin{proof}
Let $G=\bigcap_{k\ge1}U_k$ be a nonempty $G_\delta$ subset of
$X_1\times X_2$, with each $U_k$ open, and let $(x_1,x_2)\in G$. For each
$k$ choose open sets $A_k\ni x_1$ and $B_k\ni x_2$ with
$A_k\times B_k\subseteq U_k$. Then $\bigcap_kA_k$ is a nonempty
$G_\delta$ subset of $X_1$ and $\bigcap_kB_k$ is a nonempty $G_\delta$
subset of $X_2$, so they have nonempty interiors $W_1$ and $W_2$. Then
$W_1\times W_2\subseteq\bigcap_k(A_k\times B_k)\subseteq G$ is a
nonempty open set. \end{proof}

Since $X$ is an almost-$P$-space, it follows inductively that
\[
X^n\text{ is an almost-}P\text{-space}
\qquad (n\geq1).
\]

We next determine the covering dimension of these finite powers.
Fedorchuk proved that if $K$ is a paracompact space and $L$ is a
linearly ordered continuum, then
\[
\dim(K\times L)=\dim K+1;
\]
see \cite[Proposition~3.10]{Fedorchuk1998}. Since every compact
Hausdorff space is paracompact and $X$ is a linearly ordered continuum,
we may apply this result with $K=X^{n-1}$ and $L=X$. Taking $K$ to be a
singleton first gives
\[
\dim X=1.
\]
Consequently,
\[
\dim(X^n)
=
\dim(X^{n-1}\times X)
=
\dim(X^{n-1})+1,
\]
and induction yields
\[
\boxed{\dim(X^n)=n\qquad(n\geq1).}
\]

Since each $X^n$ is a compact almost-$P$-space, Theorem~\ref{thm:almostP}
gives
\[
ZDRR\bigl(C(X^n)\bigr)
=
RR\bigl(C(X^n)\bigr).
\]
Moreover, for every compact Hausdorff space $Y$,
\[
RR(C(Y))=\dim Y.
\]
Therefore,
\[
\begin{aligned}
ZDRR\bigl(C(X^n)\bigr)
&=RR\bigl(C(X^n)\bigr)\\
&=\dim(X^n)\\
&=n,
\end{aligned}
\qquad n\geq1.
\]

Thus we obtain the following.

\begin{theorem}
For every positive integer $n$, there exists a unital commutative
$C^*$-algebra $A_n$ such that
\[
ZDRR(A_n)=n.
\]
More precisely, one may take
\[
A_n=C(X^n),
\]
where $X=\beta\mathcal L$ is Levy's compact connected totally ordered
almost-$P$-space.
\end{theorem}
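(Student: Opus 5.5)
The plan is to assemble the statement from results already established: Lemma~\ref{lem:prod}, Theorem~\ref{thm:almostP}, Fedorchuk's product formula \cite[Proposition~3.10]{Fedorchuk1998}, and the formula $RR(C(Y))=\dim Y$ for compact Hausdorff $Y$ \cite{BrownPedersen}. Fix $X=\beta\mathcal L$. By \cite[Proposition~2.3]{Levy1977}, $X$ is a compact Hausdorff, connected, linearly ordered almost-$P$-space, hence a nondegenerate linearly ordered continuum. I will define $A_n=C(X^n)$ and verify, in order, that $X^n$ is compact Hausdorff and almost-$P$, that $\dim X^n=n$, and finally that $ZDRR(A_n)=n$.

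\emph{Step 1: $X^n$ is compact Hausdorff and almost-$P$.} Compactness and the Hausdorff property follow from Tychonoff's theorem for finite products. For the almost-$P$ property, I will argue by induction on $n$: the case $n=1$ is Levy's result, and for the inductive step apply Lemma~\ref{lem:prod} to $X_1=X^{n-1}$ and $X_2=X$.

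\emph{Step 2: $\dim X^n=n$.} Again by induction. For $n=1$, apply Fedorchuk's formula with $K$ a one-point space, which is paracompact of dimension $0$, and $L=X$; this gives $\dim X=1$. For $n\ge2$, note that $X^{n-1}$ is compact Hausdorff, hence paracompact. Applying the formula with $K=X^{n-1}$ and $L=X$ gives $\dim X^n=\dim X^{n-1}+1=n$. The only point requiring care is that the hypotheses of \cite[Proposition~3.10]{Fedorchuk1998} are exactly met: $L$ must be a linearly ordered continuum, meaning compact, connected and carrying the order topology. This is where I expect the main (though mild) obstacle. I will need to confirm that Levy's two-point compactification carries the order topology of the completed order. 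Dedekind completeness together with the added endpoints gives compactness, and density with no gaps gives connectedness; both are supplied by \cite{Levy1977}.

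\emph{Step 3: conclusion.} Since $X^n$ is a compact Hausdorff almost-$P$-space, Theorem~\ref{thm:almostP} gives $ZDRR(C(X^n))=RR(C(X^n))=\dim X^n$, which equals $n$ by Step~2. Hence $A_n=C(X^n)$ is a unital commutative $C^*$-algebra with $ZDRR(A_n)=n$. As a remark, the case $n=0$ mentioned in the abstract is realized trivially by $A_0=\mathbb C$ (or by any separable $C(Y)$, via Proposition~\ref{prop:sep}).
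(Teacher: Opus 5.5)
Your proposal is correct and follows the paper's argument essentially step for step. Both apply Lemma~\ref{lem:prod} inductively for the almost-$P$ property of $X^n$, then Fedorchuk's formula (first with $K$ a point, then with $K=X^{n-1}$) for $\dim X^n=n$, and finally Theorem~\ref{thm:almostP} together with $RR(C(Y))=\dim Y$; your extra check that $\beta\mathcal L$ carries the order topology and your remark that $A_0=\mathbb C$ realizes $n=0$ go beyond what the paper spells out, but do not change the route.
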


\begin{remark}
Together with the examples with $ZDRR=0$ discussed above, the theorem
shows that the zero-divisor real rank of unital commutative
$C^*$-algebras assumes every value in $\mathbb{N}_0$. Note that the
equality $ZDRR(C(X))=RR(C(X))=\dim X$ of Theorem~\ref{thm:almostP} already
shows, without any dimension formula, that $ZDRR$ is not identically
zero: the space $X$ above is connected and has more than one point, so
$\dim X\ge1$ and hence $ZDRR(C(X))\ge1$. The dimension formula is used
only to obtain the exact values $\dim X^n=n$.
\end{remark}

\begin{theorem}\label{thm:interval}
Let $X$ be a compact Hausdorff space. Then
\[
ZDRR\bigl(C(X\times[0,1])\bigr)=0 .
\]
\end{theorem}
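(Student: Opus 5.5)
The plan is to reduce everything to the fibres $\{x\}\times[0,1]$, where zero sets of sufficiently nice functions are finite. Since $C(X\times[0,1])$ is commutative, a self-adjoint element $b$ is a non-zero-divisor exactly when the zero set of $b^2$ (that is, of $b$) has empty interior. This is the first paragraph of the proof of Theorem~\ref{thm:almostP}, which does not use the almost-$P$ hypothesis. So, given a real $f\in C(X\times[0,1])$ and $\varepsilon>0$, it suffices to find a real $g$ with $\|f-g\|<\varepsilon$ whose zero set $Z(g)$ has empty interior.

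\emph{Fibre criterion.} Suppose that for every $x\in X$ the function $s\mapsto g(x,s)$ has only finitely many zeros in $[0,1]$. Then $Z(g)$ has empty interior. Indeed, a nonempty open subset of $X\times[0,1]$ contains a box $U\times I$ with $U\ni x$ open and $I$ a nondegenerate interval. Then $g(x,\cdot)$ would vanish on all of $I$, contradicting finiteness.

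\emph{Polynomial approximation.} Let $\mathcal P$ be the set of functions of the form $\sum_{k=0}^N a_k(x)s^k$ with $a_k\in C(X)$ real. This is a unital subalgebra of the real algebra $C_{\mathbb R}(X\times[0,1])$. It separates points: $C(X)$ separates the $x$-coordinates and the function $s$ separates the $s$-coordinates. By the real Stone--Weierstrass theorem there is $p=\sum_{k=0}^N a_k(x)s^k\in\mathcal P$ with $\|f-p\|<\varepsilon/2$. Now put
\[
g(x,s)=p(x,s)+\tfrac{\varepsilon}{4}\,s^{N+1}.
\]
Then $\|f-g\|<\varepsilon$. For each fixed $x$, the function $g(x,\cdot)$ is a real polynomial of degree exactly $N+1$, with leading coefficient $\varepsilon/4\neq0$. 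Hence it has at most $N+1$ zeros. By the fibre criterion, $Z(g)$ has empty interior, so $g^2$ is a non-zero-divisor and $ZDRR(C(X\times[0,1]))\le0$, which is the statement.

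The main step, and the only place needing care, is the fibre criterion together with ensuring that every fibre polynomial is genuinely nonzero. A naive shift $f-t$, as in Proposition~\ref{prop:sep}, fails here because $X$ need not be separable. Adding the top-degree monomial removes any dependence on whether the coefficients $a_k(x)$ vanish, and the argument then works for arbitrary compact Hausdorff $X$, almost-$P$ or not. Combined with the $\dim(X^n)=n$ computation above, this should also give Corollary~\ref{cor:gap} by taking $X$ to be Levy's space.
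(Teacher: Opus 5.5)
Your proof is correct, but it takes a different route from the paper's.

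\textbf{The paper's route.} The paper perturbs only in the interval variable, taking $b(x,t)=f(x,t)+v(t)$ with $v\in C[0,1]$ chosen by a Baire category argument. Since $x\mapsto f_x$ is continuous into $C[0,1]$, each set $K_I=\{f_x|_I : x\in X\}$ is compact, hence has empty interior in the infinite-dimensional space $C(I)$. The open mapping theorem transfers this to the closed sets $B_I\subseteq C[0,1]$, and Baire over the countably many rational intervals $I$ gives a small $v$ avoiding all of them. The conclusion is only that no fibre $b(x,\cdot)$ vanishes identically on a subinterval.

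\textbf{Your route.} You first approximate $f$ by an element of $C_{\mathbb R}(X)\otimes\mathbb R[s]$ via Stone--Weierstrass. You then add the term $\tfrac{\varepsilon}{4}s^{N+1}$, which forces every fibre to be a polynomial of exact degree $N+1$.

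\textbf{What each buys.} Your argument is more elementary and fully explicit, and its conclusion is stronger: $Z(g)$ meets each fibre $\{x\}\times[0,1]$ in at most $N+1$ points, uniformly in $x$. It also adapts at once to matrix coefficients, i.e.\ to $M_n(C(X\times[0,1]))$. There $\det g(x,\cdot)$ is a polynomial with leading coefficient $(\varepsilon/4)^n$, so $g(x,s)$ is invertible for all but finitely many $s$. That is a small step toward the paper's question about $A\otimes C[0,1]$. The paper's argument needs no polynomial structure. In exchange, it shows that the perturbation can be taken of the special form $1\otimes v$ coming from the $C[0,1]$ factor alone, and that a comeager set of such small $v$ works.
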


\begin{proof}
Put $Y=X\times[0,1]$. Let $f\in C(Y)$ be real valued and let
$\varepsilon>0$. For $x\in X$ write $f_x=f(x,\cdot)\in C[0,1]$. Since $Y$
is compact, $x\mapsto f_x$ is continuous from $X$ to $C[0,1]$ with the
supremum norm. For a closed interval $I=[p,q]\subseteq[0,1]$ with
rational endpoints $p<q$, the set
\[
K_I=\{f_x|_I:\ x\in X\}\subseteq C(I)
\]
is therefore compact. Since $C(I)$ is infinite dimensional, $K_I$ has
empty interior. The restriction map $R_I:C[0,1]\to C(I)$ is a bounded
linear surjection, hence open by the open mapping theorem, so the closed
set
\[
B_I=\{v\in C[0,1]:\ R_I v\in -K_I\}
\]
has empty interior (a nonempty open subset of $B_I$ would be mapped by
$R_I$ onto a nonempty open subset of $-K_I$). There are countably many
rational intervals $I$, so by the Baire category theorem there is
$v\in C[0,1]$ with $\|v\|<\varepsilon$ and $v\notin\bigcup_I B_I$. Define
$b(x,t)=f(x,t)+v(t)$. Then $b$ is real valued and
$\|f-b\|=\|v\|<\varepsilon$.

We claim that the zero set of $b$ has empty interior. Otherwise it
contains a set $V\times J$ with $V\subseteq X$ open and nonempty and
$J\subseteq[0,1]$ a nonempty open interval. Choose a rational interval
$I\subseteq J$ and a point $x\in V$. Then $f_x(t)+v(t)=0$ for all
$t\in I$, that is $R_Iv=-f_x|_I\in -K_I$, so $v\in B_I$, a
contradiction. Hence $b^2$ has a zero set with empty interior, and $b^2$
is a non-zero-divisor in $C(Y)$. Therefore $ZDRR(C(Y))\le 0$.
\end{proof}

\begin{corollary}\label{cor:gap}
For every positive integer $n$ there is a compact Hausdorff space $Y_n$
with
\[
ZDRR\bigl(C(Y_n)\bigr)=0
\qquad\text{and}\qquad
RR\bigl(C(Y_n)\bigr)\ge n .
\]
In particular one may take $Y_n=X^n\times[0,1]$, where $X$ is the space
of Section~3.1.
\end{corollary}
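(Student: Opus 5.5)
The plan is to combine Theorem~\ref{thm:interval} with the dimension computation from Section~3.1, taking $Y_n=X^n\times[0,1]$ with $X=\beta\mathcal L$ Levy's compact connected totally ordered almost-$P$-space.

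\emph{Step 1: the vanishing of $ZDRR$.} The space $X^n$ is compact Hausdorff, being a finite product of compact Hausdorff spaces. Theorem~\ref{thm:interval}, applied with the compact space $X^n$ in place of $X$, then gives $ZDRR(C(Y_n))=0$ at once.

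\emph{Step 2: the lower bound on the real rank.} By the classical formula $RR(C(Y))=\dim Y$ for compact Hausdorff $Y$, it suffices to show $\dim(X^n\times[0,1])\ge n$. I would invoke Fedorchuk's formula \cite[Proposition~3.10]{Fedorchuk1998}, as already used in Section~3.1. Take $K=X^n$, which is compact and hence paracompact, and take $L=[0,1]$, which is a linearly ordered continuum. This gives
\[
\dim(X^n\times[0,1])=\dim X^n+1=n+1\ge n,
\]
using $\dim X^n=n$ from Section~3.1. Hence $RR(C(Y_n))\ge n$, in fact equal to $n+1$.

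\emph{Possible objection to Step 2.} The only point needing care is whether Fedorchuk's result applies with $L=[0,1]$. It does, since $[0,1]$ is the standard linearly ordered continuum. A fallback that avoids the formula is monotonicity of covering dimension under closed subspaces. The set $X^n\times\{0\}$ is closed in $Y_n$ and homeomorphic to $X^n$, so $\dim Y_n\ge\dim X^n=n$, which already gives the stated inequality. I would present this closed-subspace argument as the main proof, since it uses only $\dim X^n=n$ and the standard fact that $\dim F\le\dim Y$ for closed $F\subseteq Y$ in a normal space $Y$. I expect no real obstacle. The only nontrivial input, $\dim X^n=n$, was established in Section~3.1.
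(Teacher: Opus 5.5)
Your proposal is correct and is essentially the paper's proof. Both apply Theorem~\ref{thm:interval} to the compact space $X^n$ to get $ZDRR(C(Y_n))=0$, and both use monotonicity of covering dimension on the closed copy $X^n\times\{0\}$ to get $RR(C(Y_n))=\dim Y_n\ge n$. Your optional use of Fedorchuk's formula with $L=[0,1]$ is also valid and sharpens the bound to $RR(C(Y_n))=n+1$.
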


\begin{proof}
By Theorem~\ref{thm:interval}, $ZDRR(C(Y_n))=0$. The set
$X^n\times\{0\}$ is closed in $Y_n$ and homeomorphic to $X^n$, and the
covering dimension does not increase on passing to closed subspaces of a
normal space. Hence $RR(C(Y_n))=\dim Y_n\ge\dim X^n=n$.
\end{proof}

\begin{remark}
The unital subalgebra $C(X^n)\subseteq C(X^n\times[0,1])$ has
$ZDRR=n$, while the larger algebra has $ZDRR=0$. Thus $ZDRR$ can drop
when one tensors with $C[0,1]$, whereas $RR(C(Y\times[0,1]))=\dim(Y\times[0,1])$
is never smaller than $RR(C(Y))=\dim Y$.
\end{remark}

\begin{question}
Is $ZDRR(A\otimes C[0,1])=0$ for every unital $C^*$-algebra $A$? Is there
a simple unital noncommutative $C^*$-algebra $A$ with $ZDRR(A)>0$?
\end{question}

\begin{remark}
Since $ZDRR(A)\le RR(A)$, we have $ZDRR(A)=0$ whenever $RR(A)=0$; in
particular $ZDRR$ vanishes for AF algebras, UHF algebras, Cuntz algebras
and von Neumann algebras. A noncommutative example with $ZDRR(A)>0$
therefore requires $RR(A)\ge1$.
\end{remark}

\section*{Statements and Declarations}

\subsection*{Competing Interests}
The author declares that there are no competing interests.

\subsection*{Data Availability}
No datasets were generated or analysed during the current study.

\subsection*{Use of AI Tools}
During the preparation of this manuscript, the author used AI-assisted
tools for language editing, LaTeX formatting, and discussion of
mathematical arguments. The author takes full responsibility for the
content, mathematical results, and final version of the manuscript.

\end{document}